
\documentclass[12pt,fleqn]{article}

\usepackage{amscd,amsmath,amssymb,amsthm}
\usepackage{geometry} \geometry{margin=0.97in}
\usepackage[pdfpagemode=UseNone,pdfstartview=FitH,hypertex]{hyperref}

\newcommand{\bdry}[1]{\partial #1}
\newcommand{\D}{\mathcal D}
\newcommand{\eps}{\varepsilon}
\newcommand{\F}{\mathcal F}
\newcommand{\N}{\mathbb N}
\newcommand{\norm}[2][]{\left\|#2\right\|_{#1}}
\renewcommand{\o}{\text{o}}
\newcommand{\PS}[1]{$(\text{PS})_{#1}$}
\newcommand{\pnorm}[2][]{\if #1'' \left|#2\right|_p \else \left|#2\right|_{#1} \fi}
\newcommand{\R}{\mathbb R}
\newcommand{\seq}[1]{\left(#1\right)}
\newcommand{\set}[1]{\left\{#1\right\}}
\newcommand{\vol}[1]{\left|#1\right|}
\newcommand{\Z}{\mathbb Z}

\newtheorem{corollary}{Corollary}[section]
\newtheorem{lemma}[corollary]{Lemma}

\newtheorem{theorem}[corollary]{Theorem}

\theoremstyle{definition}

\theoremstyle{remark}

\numberwithin{equation}{section}

\title{\bf A bifurcation and multiplicity result for a critical growth elliptic problem\thanks{{\em MSC2010:} Primary 35J92, Secondary 35B33, 35B32, 35J20
\newline \indent\; {\em Key Words and Phrases:} critical growth $p$-Laplacian problems, multiple nontrivial solutions, variational methods}}
\author{\bf Said El Manouni\\
Department of Mathematics\\
Imam Mohammad Ibn Saud Islamic University (IMSIU)\\
P. O. Box 90950, Riyadh 11623, Saudi Arabia\\
\em samanouni@imamu.edu.sa\\
[\bigskipamount]
\bf Kanishka Perera\\
Department of Mathematics\\
Florida Institute of Technology\\
150 W University Blvd, Melbourne, FL 32901-6975, USA\\
\em kperera@fit.edu}
\date{}

\begin{document}

\maketitle

\begin{abstract}
We consider a Br{\'e}zis-Nirenberg type critical growth $p$-Laplacian problem involving a parameter $\mu > 0$ in a smooth bounded domain $\Omega$. We prove the existence of multiple nontrivial solutions if either $\mu$ or the volume of $\Omega$ is sufficiently small. The proof is based on an abstract critical point theorem that only assumes a local \PS{} condition. Our results are new even in the semilinear case $p = 2$.
\end{abstract}

\section{Introduction}

Consider the critical growth $p$-Laplacian problem
\begin{equation} \label{1}
\left\{\begin{aligned}
- \Delta_p\, u + \mu\, |u|^{q-2}\, u & = \lambda\, |u|^{p-2}\, u + |u|^{p^\ast - 2}\, u && \text{in } \Omega\\[10pt]
u & = 0 && \text{on } \bdry{\Omega},
\end{aligned}\right.
\end{equation}
where $\Omega$ is a smooth bounded domain in $\R^N,\, N \ge 2$, $1 < q < p < N$, $p^\ast = Np/(N - p)$ is the critical Sobolev exponent, and $\lambda, \mu > 0$ are parameters. When $\mu = 0$ this problem reduces to the well-known Br{\'e}zis-Nirenberg problem
\[
\left\{\begin{aligned}
- \Delta_p\, u & = \lambda\, |u|^{p-2}\, u + |u|^{p^\ast - 2}\, u && \text{in } \Omega\\[10pt]
u & = 0 && \text{on } \bdry{\Omega},
\end{aligned}\right.
\]
which has been extensively studied in the literature beginning with the celebrated paper \cite{MR709644} (see, e.g., [1, 4--8, 12--16, 19, 20, 22, 23, 26, 29--33, 37--40]
for the semilinear case $p = 2$ and \cite{MR1974041,MR1741848,MR2885967,MR2504036,MR3912725,MR2514055,MR956567,MR912211,MR1009077,MR3469053} for the general case). However, as we will see in the present paper, a new phenomenon appears when $\mu > 0$, namely, if either $\mu$ or the volume of $\Omega$ is sufficiently small, then the number of solutions goes to infinity as $\lambda \to \infty$. A corresponding result does not seem to be known when $\mu = 0$ even in the semilinear case $p = 2$.

To state our result, let $\seq{\lambda_k}$ be the sequence of Dirichlet eigenvalues of the $p$-Laplacian in $\Omega$ defined using the $\Z_2$-cohomological index (see Perera \cite{MR1998432}). We will show that given any $\lambda \ge \lambda_k$, problem \eqref{1} has $k$ distinct pairs of nontrivial solutions if $\mu^{p^\ast/(p^\ast - q)} \vol{\Omega}$ is sufficiently small, where $\vol{\Omega}$ denotes the volume of $\Omega$. Let
\begin{equation} \label{6}
S = \inf_{u \in \D^{1,\,p}(\R^N) \setminus \set{0}}\, \frac{\int_{\R^N} |\nabla u|^p\, dx}{\left(\int_{\R^N} |u|^{p^\ast} dx\right)^{p/p^\ast}}
\end{equation}
be the best Sobolev constant. We have the following theorem.

\begin{theorem} \label{Theorem 1}
Assume that
\begin{equation} \label{2}
\mu^{p^\ast/(p^\ast - q)} \vol{\Omega} < \frac{pq\, S^{N/p}}{Np - (N - p)\, q}.
\end{equation}
Then for all $\lambda \ge \lambda_k$, problem \eqref{1} has $k$ distinct pairs of nontrivial solutions $\pm u^{\lambda,\,\mu}_j,\, j = 1,\dots,k$ such that $u^{\lambda,\,\mu}_j \to 0$ as $\mu \searrow 0$.
\end{theorem}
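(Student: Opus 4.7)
The plan is to apply an abstract cohomological-index multiplicity theorem to the even functional
\[
I_{\lambda,\mu}(u) = \frac{1}{p}\int_\Omega |\nabla u|^p\, dx + \frac{\mu}{q}\int_\Omega |u|^q\, dx - \frac{\lambda}{p}\int_\Omega |u|^p\, dx - \frac{1}{p^*}\int_\Omega |u|^{p^*}\, dx,
\]
on $W_0^{1,p}(\Omega)$, whose critical points coincide with the weak solutions of \eqref{1}. Since $q < p$, the subcritical term $(\mu/q)\,\pnorm[q]{u}^q$ dominates for small $u$, so $0$ is a local minimum of $I_{\lambda,\mu}$. The argument will proceed in three steps: (a) a local \PS{} condition below the threshold $c^* := S^{N/p}/N$; (b) the construction of a symmetric set $A$ of $\Z_2$-cohomological index $\ge k$ with $\sup_A I_{\lambda,\mu} < c^*$; and (c) the decay of the critical points as $\mu \searrow 0$.

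For step (a) I would start from the identity
\[
I_{\lambda,\mu}(u) - \frac{1}{p}\,\langle I'_{\lambda,\mu}(u), u\rangle = \frac{1}{N}\,\pnorm[p^*]{u}^{p^*} + \mu\,\frac{p - q}{pq}\,\pnorm[q]{u}^q,
\]
which, combined with $\langle I'_{\lambda,\mu}(u_j), u_j\rangle = \o(\norm{u_j})$, H\"older's inequality on $\Omega$ and the Sobolev embedding, gives boundedness of every \PS{c}-sequence $\seq{u_j}$. A standard concentration-compactness analysis of $u_j - u$ (where $u_j \rightharpoonup u$) together with the Br\'ezis-Lieb lemma then rules out loss of compactness at every level $c < c^*$.

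For step (b), the characterization of $\lambda_k$ via the $\Z_2$-cohomological index \cite{MR1998432} supplies, for each $\lambda \ge \lambda_k$, a symmetric compact set $A \subset \set{u \in W_0^{1,p}(\Omega) : \pnorm[p]{u} = 1}$ with index $\ge k$ and $\norm{u}^p \le \lambda$ on $A$. On the cone $\set{tv : v \in A,\, t \ge 0}$, whose nonzero part inherits index $\ge k$, the $p$-homogeneous piece $(\norm{v}^p - \lambda)/p$ is $\le 0$, so
\[
I_{\lambda,\mu}(tv) \le \frac{\mu\, t^q}{q}\,\pnorm[q]{v}^q - \frac{t^{p^*}}{p^*}\,\pnorm[p^*]{v}^{p^*}.
\]
Maximizing over $t > 0$ and invoking H\"older in the form $\pnorm[q]{v}^{qp^*/(p^*-q)} \le \vol{\Omega}\,\pnorm[p^*]{v}^{qp^*/(p^*-q)}$ yields
\[
\sup_{v \in A,\, t \ge 0} I_{\lambda,\mu}(tv) \le \frac{p^* - q}{qp^*}\, \mu^{p^*/(p^* - q)}\, \vol{\Omega}.
\]
Using the identities $p^* - q = (Np - (N-p)q)/(N-p)$ and $p^* = Np/(N-p)$, this upper bound equals $S^{N/p}/N$ precisely when equality holds in \eqref{2}, so the strict hypothesis \eqref{2} places the supremum of $I_{\lambda,\mu}$ on the cone strictly below $c^*$.

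The abstract theorem then produces $k$ pairs of nontrivial critical points $\pm u^{\lambda,\mu}_j$ at positive levels $c_j \le C\,\mu^{p^*/(p^*-q)}\,\vol{\Omega}$. For step (c), substituting $u^{\lambda,\mu}_j$ into the identity of step (a) gives
\[
c_j = \frac{1}{N}\,\pnorm[p^*]{u^{\lambda,\mu}_j}^{p^*} + \mu\,\frac{p-q}{pq}\,\pnorm[q]{u^{\lambda,\mu}_j}^q,
\]
forcing both summands to vanish as $\mu \searrow 0$; combining with the Euler-Lagrange identity $\norm{u^{\lambda,\mu}_j}^p + \mu\,\pnorm[q]{u^{\lambda,\mu}_j}^q = \lambda\,\pnorm[p]{u^{\lambda,\mu}_j}^p + \pnorm[p^*]{u^{\lambda,\mu}_j}^{p^*}$ and the H\"older and Sobolev embeddings then yields $\norm{u^{\lambda,\mu}_j} \to 0$. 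The main obstacle I anticipate is identifying the precise abstract theorem: since $0$ is a local minimum of $I_{\lambda,\mu}$ only in a nonuniform sense (positivity near $0$ is controlled by the $L^q$-direction alone), the theorem must operate via the $\Z_2$-cohomological index of sublevel sets rather than a geometric linking dimension, and must demand only local \PS{}.
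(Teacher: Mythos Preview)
Your proposal is correct and follows essentially the same route as the paper: the same local Palais--Smale threshold $c^\ast = S^{N/p}/N$, the same H\"older-plus-maximization bound $\bigl(\tfrac{1}{q}-\tfrac{1}{p^\ast}\bigr)\mu^{p^\ast/(p^\ast-q)}\vol{\Omega}$ on the cone over an index-$k$ set, and the same decay argument via $E(u)-\tfrac{1}{p}\,E'(u)\,u$. The abstract result you are looking for is precisely Theorem~2.1 of \cite{Pe23} (Theorem~\ref{Theorem 3} here); when writing the full proof you should make explicit that the compact symmetric set of index $k$ inside the sublevel set requires Degiovanni--Lancelotti~\cite{MR2514055} (after arranging $\lambda_k<\lambda_{k+1}$), that Theorem~\ref{Theorem 3} also asks for $E\le 0$ on the sphere of some large radius $R$ (immediate since $\pnorm[p^\ast]{v}$ is bounded below on the compact set), and that the strict local minimum at $0$ needs Gagliardo--Nirenberg to control $\pnorm[p]{u}^p$ by $\norm{u}^p$ and $(\pnorm[q]{u}^q)^{p/q}$ rather than the one-line heuristic that the $L^q$-term dominates.
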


In particular, we have the following corollary for the case $\mu = 1$.

\begin{corollary}
If
\[
\vol{\Omega} < \frac{pq\, S^{N/p}}{Np - (N - p)\, q},
\]
then the problem
\[
\left\{\begin{aligned}
- \Delta_p\, u + |u|^{q-2}\, u & = \lambda\, |u|^{p-2}\, u + |u|^{p^\ast - 2}\, u && \text{in } \Omega\\[10pt]
u & = 0 && \text{on } \bdry{\Omega}
\end{aligned}\right.
\]
has $k$ distinct pairs of nontrivial solutions for all $\lambda \ge \lambda_k$. In particular, the number of solutions goes to infinity as $\lambda \to \infty$.
\end{corollary}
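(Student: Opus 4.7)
The plan is to obtain the corollary as a direct specialization of Theorem~\ref{Theorem 1} to the case $\mu = 1$, combined with the unboundedness of the eigenvalue sequence $\seq{\lambda_k}$.

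First, I would substitute $\mu = 1$ into the hypothesis \eqref{2} of Theorem~\ref{Theorem 1}. Since $1^{p^\ast/(p^\ast - q)} = 1$, the inequality collapses to
\[
\vol{\Omega} < \frac{pq\, S^{N/p}}{Np - (N - p)\, q},
\]
which is precisely the standing assumption of the corollary. Consequently the hypothesis of Theorem~\ref{Theorem 1} is satisfied in the $\mu = 1$ setting under consideration, and the boundary value problem written in the corollary is literally \eqref{1} with $\mu = 1$.

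Next I would apply Theorem~\ref{Theorem 1} verbatim with $\mu = 1$: for every $\lambda \ge \lambda_k$ it produces $k$ distinct pairs of nontrivial solutions $\pm u^{\lambda,\,1}_j$, $j = 1,\dots,k$. This already establishes the main assertion; the asymptotic statement $u^{\lambda,\,\mu}_j \to 0$ as $\mu \searrow 0$ from Theorem~\ref{Theorem 1} is not needed here, since $\mu$ is fixed.

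For the concluding ``in particular'' clause, I would invoke the standard fact that the $\Z_2$-cohomological-index eigenvalues $\seq{\lambda_k}$ constructed in \cite{MR1998432} form a nondecreasing sequence with $\lambda_k \to \infty$. Given any prescribed $m \in \N$, once $\lambda \ge \lambda_m$ the first part of the corollary yields at least $m$ pairs of nontrivial solutions; since $\lambda_m$ is finite for every $m$ but $\lambda_m \to \infty$, the number of pairs of solutions grows without bound as $\lambda \to \infty$. There is no genuine obstacle in this argument; the corollary is essentially a bookkeeping consequence of Theorem~\ref{Theorem 1}, and the only input beyond that theorem is the well-known divergence of the cohomological-index eigenvalue sequence, which I would simply cite from \cite{MR1998432}.
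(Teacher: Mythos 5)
Your proposal is correct and coincides with the paper's (implicit) proof: the corollary is obtained exactly by setting $\mu = 1$ in Theorem \ref{Theorem 1}, under which hypothesis \eqref{2} reduces to the stated volume condition, and the ``in particular'' clause follows since each $\lambda_m$ is finite so that $\lambda \ge \lambda_m$ eventually yields $m$ pairs of solutions. Nothing further is needed.
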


Theorem \ref{Theorem 1} and its corollary are new even in the semilinear case $p = 2$, which we restate as the following corollaries where $N \ge 3$, $1 < q < 2$, and $\seq{\lambda_k}$ is the sequence of Dirichlet eigenvalues of the Laplacian in $\Omega$.

\begin{corollary}
Assume that
\[
\mu^{2^\ast/(2^\ast - q)} \vol{\Omega} < \frac{2q\, S^{N/2}}{2N - (N - 2)\, q}.
\]
Then for all $\lambda \ge \lambda_k$, the problem
\[
\left\{\begin{aligned}
- \Delta u + \mu\, |u|^{q-2}\, u & = \lambda u + |u|^{2^\ast - 2}\, u && \text{in } \Omega\\[10pt]
u & = 0 && \text{on } \bdry{\Omega}
\end{aligned}\right.
\]
has $k$ distinct pairs of nontrivial solutions $\pm u^{\lambda,\,\mu}_j,\, j = 1,\dots,k$ such that $u^{\lambda,\,\mu}_j \to 0$ as $\mu \searrow 0$.
\end{corollary}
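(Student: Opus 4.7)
The final statement is the verbatim specialization of Theorem \ref{Theorem 1} to the semilinear case $p = 2$: when $p = 2$ we have $\Delta_p = \Delta$, $p^\ast = 2^\ast = 2N/(N-2)$, the requirement $p < N$ forces $N \ge 3$ as stated, and the right-hand side of \eqref{2} becomes $2q\,S^{N/2}/[2N - (N-2)q]$, which is exactly the displayed hypothesis. Thus the proof is just the sentence ``apply Theorem \ref{Theorem 1} with $p = 2$.''

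All substantive work therefore belongs to Theorem \ref{Theorem 1} itself, which is not proved in the excerpt, and I describe my plan for that theorem. The setting is variational: solutions of \eqref{1} are critical points on $W^{1,p}_0(\Omega)$ of
\[
I_{\lambda,\mu}(u) = \frac{1}{p}\int_\Omega |\nabla u|^p\,dx + \frac{\mu}{q}\int_\Omega |u|^q\,dx - \frac{\lambda}{p}\int_\Omega |u|^p\,dx - \frac{1}{p^\ast}\int_\Omega |u|^{p^\ast}\,dx.
\]
The critical exponent destroys the global Palais--Smale condition, but a standard concentration-compactness argument yields \PS{c} at every level $c < S^{N/p}/N$. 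The plan is to feed this local compactness into the abstract multiplicity theorem announced in the abstract---one that only requires a local \PS{} condition and produces $k$ pairs of critical points once one exhibits a symmetric set of $\Z_2$-cohomological index at least $k$ on which $I_{\lambda,\mu}$ stays strictly below the compactness threshold.

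The central step, and where I expect the main difficulty, is constructing such a symmetric set. Because the $\lambda_k$ are defined via the cohomological index (Perera), there is a symmetric compact set $A_k$ of index $k$ on which the Rayleigh quotient is bounded by $\lambda_k$. On dilates $tA_k$ with $\lambda \ge \lambda_k$, the $p$-homogeneous contribution $\tfrac{1}{p}\int|\nabla u|^p - \tfrac{\lambda}{p}\int|u|^p$ is non-positive and the $p^\ast$-term is negative, so only the subcritical perturbation $\tfrac{\mu}{q}\int|u|^q\,dx$ is positive; H\"older's inequality controls it by a multiple of $\vol{\Omega}^{1 - q/p^\ast}\,t^q$, and one-variable optimization in $t$ yields an upper bound on $\sup_{tA_k} I_{\lambda,\mu}$ proportional to $\mu^{p^\ast/(p^\ast - q)}\,\vol{\Omega}$. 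Requiring that this bound lie strictly below $S^{N/p}/N$ must reproduce \eqref{2}; the delicate part is performing the optimization carefully enough to recover the sharp constant $pq/[Np - (N-p)q]$. Once this sub-threshold estimate is in hand, the abstract theorem delivers the pairs $\pm u^{\lambda,\mu}_j$, and since the same estimate tends to zero as $\mu \searrow 0$, the Palais--Smale identity for $I_{\lambda,\mu}$ forces $u^{\lambda,\mu}_j \to 0$.
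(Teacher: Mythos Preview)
Your proposal is correct and follows essentially the same route as the paper: the corollary is indeed nothing but Theorem~\ref{Theorem 1} with $p=2$, and your outline for Theorem~\ref{Theorem 1} (local \PS{c} below $S^{N/p}/N$, the index-$k$ compact symmetric set from the cohomological eigenvalues, H\"older on the $q$-term plus one-variable maximization in $t$ to obtain the bound $(\tfrac{1}{q}-\tfrac{1}{p^\ast})\vol{\Omega}\,\mu^{p^\ast/(p^\ast-q)}$, and then the abstract multiplicity theorem) matches the paper's Section~2 proof. The only ingredient you leave implicit is the verification that the origin is a strict local minimizer of $I_{\lambda,\mu}$---a hypothesis of the abstract theorem that is not entirely automatic because $\lambda$ may be large; the paper handles it via the Gagliardo--Nirenberg inequality and Young's inequality to absorb the $-\tfrac{\lambda}{p}\int|u|^p$ term into the gradient and $q$-norm terms.
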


\begin{corollary}
If
\[
\vol{\Omega} < \frac{2q\, S^{N/2}}{2N - (N - 2)\, q},
\]
then the problem
\[
\left\{\begin{aligned}
- \Delta u + |u|^{q-2}\, u & = \lambda u + |u|^{2^\ast - 2}\, u && \text{in } \Omega\\[10pt]
u & = 0 && \text{on } \bdry{\Omega}
\end{aligned}\right.
\]
has $k$ distinct pairs of nontrivial solutions for all $\lambda \ge \lambda_k$. In particular, the number of solutions goes to infinity as $\lambda \to \infty$.
\end{corollary}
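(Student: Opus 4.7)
This corollary is just the specialization of Theorem \ref{Theorem 1} to $p = 2$ and $\mu = 1$: substituting these values into \eqref{2} collapses the prefactor $\mu^{p^\ast/(p^\ast - q)}$ to $1$ and reduces the hypothesis to the stated bound on $\vol{\Omega}$, so Theorem \ref{Theorem 1} directly yields $k$ pairs of nontrivial solutions for every $\lambda \ge \lambda_k$. The ``in particular'' assertion is then the classical fact that $\lambda_k \to \infty$: given any $N_0 \in \N$, pick $k \ge N_0$ and any $\lambda \ge \lambda_k$ to obtain at least $N_0$ pairs. All the substantive content therefore lies in Theorem \ref{Theorem 1}, whose proof I would attack variationally as follows.

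Weak solutions of \eqref{1} are critical points of
\[
I_{\lambda,\mu}(u) = \frac{1}{p}\int_\Omega |\nabla u|^p\, dx + \frac{\mu}{q}\int_\Omega |u|^q\, dx - \frac{\lambda}{p}\int_\Omega |u|^p\, dx - \frac{1}{p^\ast}\int_\Omega |u|^{p^\ast}\, dx
\]
on $W_0^{1,p}(\Omega)$. The critical Sobolev exponent destroys the global Palais-Smale condition, but a standard concentration argument gives a local \PS{c} property below the threshold $c^\ast = S^{N/p}/N$. To produce $k$ pairs of critical points below this level I would apply a symmetric min-max based on the $\Z_2$-cohomological index of Perera \cite{MR1998432}; the variational characterization of $\lambda_k$ in that framework supplies the required linking geometry as soon as $\lambda \ge \lambda_k$.

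The hypothesis \eqref{2} should enter precisely in forcing the min-max levels below $c^\ast$. On a symmetric set of index $\ge k$ spanned by the first $k$ eigenfunctions, the part $\frac{1}{p}\int|\nabla u|^p - \frac{\lambda}{p}\int|u|^p$ is non-positive and the $p^\ast$ term sends $I_{\lambda,\mu}$ to $-\infty$ along rays, so the only positive contribution to the maximum is $\frac{\mu}{q}\int_\Omega |u|^q\, dx$. H\"older's inequality with exponents $p^\ast/q$ and $p^\ast/(p^\ast - q)$ gives
\[
\int_\Omega |u|^q\, dx \le \vol{\Omega}^{(p^\ast - q)/p^\ast}\norm[p^\ast]{u}^q,
\]
and maximizing along rays $t > 0$, balancing the $t^q$ and $t^{p^\ast}$ contributions, produces a min-max bound of the shape $\frac{p^\ast - q}{qp^\ast}\, \mu^{p^\ast/(p^\ast - q)}\vol{\Omega}$. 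Demanding this to be strictly less than $c^\ast = S^{N/p}/N$ rearranges to exactly \eqref{2}. The main obstacle will be verifying the local \PS{c} compactness in the quasilinear $p$-Laplacian setting, where gradient convergence requires concentration-compactness-type arguments; once that is in place the asymptotic $u^{\lambda,\mu}_j \to 0$ as $\mu \searrow 0$ reads off from the same estimate, which forces the critical values to vanish with $\mu$.
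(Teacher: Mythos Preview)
Your proposal is correct and matches the paper's approach exactly: the paper also derives this corollary (and the preceding one) as an immediate specialization of Theorem~\ref{Theorem 1}, with no separate proof, and your sketch of Theorem~\ref{Theorem 1} (local \PS{c} below $S^{N/p}/N$, $\Z_2$-cohomological-index min-max, H\"older plus ray-maximization yielding the bound $\bigl(\tfrac{1}{q}-\tfrac{1}{p^\ast}\bigr)\mu^{p^\ast/(p^\ast-q)}\vol{\Omega}$) is precisely the paper's argument. One small caveat: for general $p$ the phrase ``spanned by the first $k$ eigenfunctions'' is not meaningful since the $p$-Laplacian is nonlinear; the paper instead invokes the Degiovanni--Lancelotti result to produce a compact symmetric set $C\subset\Psi^{\lambda_k}$ of index $k$, and also needs a separate Gagliardo--Nirenberg argument to verify that $0$ is a strict local minimizer---both points your sketch glosses over but which are straightforward.
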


The variational functional associated with problem \eqref{1} is given by
\[
E(u) = \frac{1}{p} \int_\Omega |\nabla u|^p\, dx + \frac{\mu}{q} \int_\Omega |u|^q\, dx - \frac{\lambda}{p} \int_\Omega |u|^p\, dx - \frac{1}{p^\ast} \int_\Omega |u|^{p^\ast} dx, \quad u \in W^{1,\,p}_0(\Omega).
\]
Although this functional has $\Z_2$ symmetry, the existence of multiple critical points does not follow from standard arguments involving various index theories due to the lack of compactness. The functional $E$ satisfies the \PS{c} condition only for $c < \frac{1}{N}\, S^{N/p}$ (see Lemma \ref{Lemma 1}), and one needs to show that any minimax levels defined via an index theory are below this threshold for compactness in order to conclude that they are critical levels. We will prove Theorem \ref{Theorem 1} using an abstract critical point theorem recently obtained in Perera \cite{Pe23} that only assumes a local \PS{} condition and therefore can produce such critical levels (see Theorem \ref{Theorem 3}).

We would like to point out that it is somewhat surprising that the term $\mu\, |u|^{q-2}\, u$ results in more solutions since the term $\frac{\mu}{q} \int_\Omega |u|^q\, dx$ increases the energy without lowering the threshold for compactness.

\section{Proof of Theorem \ref{Theorem 1}}

In this section we prove Theorem \ref{Theorem 1}. The proof will be based on an abstract result from Perera \cite{Pe23}. To state this result, let $W$ be a Banach space and let $E \in C^1(W,\R)$ be an even functional, i.e., $E(-u) = E(u)$ for all $u \in E$. Assume that there exists $c^\ast > 0$ such that for all $c \in (0,c^\ast)$, $E$ satisfies the \PS{c} condition, i.e., every sequence $\seq{u_j}$ in $W$ such that $E(u_j) \to c$ and $E'(u_j) \to 0$ has a strongly convergent subsequence. Let $S = \set{u \in W : \norm{u} = 1}$ be the unit sphere in $W$ and let $i(A)$ denote the $\Z_2$-cohomological index of the symmetric set $A \subset W \setminus \set{0}$ (see Fadell and Rabinowitz \cite{MR0478189}).

\begin{theorem}[Perera {\cite[Theorem 2.1]{Pe23}}] \label{Theorem 3}
Let $C$ be a compact symmetric subset of $S$ with $i(C) = k \ge 1$. Assume that the origin is a strict local minimizer of $E$ with $E(0) = 0$ and there exists $R > 0$ such that
\begin{equation} \label{5}
\sup_{u \in A}\, E(u) \le 0, \qquad \sup_{u \in X}\, E(u) < c^\ast,
\end{equation}
where $A = \set{Ru : u \in C}$ and $X = \set{tu : u \in C,\, 0 \le t \le R}$. Then $E$ has $k$ distinct pairs of nontrivial critical points $\pm u_j,\, j = 1,\dots,k$ satisfying
\[
0 < E(u_j) \le \sup_{u \in X}\, E(u).
\]
\end{theorem}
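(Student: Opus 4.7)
My plan is to carry out a standard equivariant min-max scheme built on the $\Z_2$-cohomological index $i$ of Fadell--Rabinowitz \cite{MR0478189}, adapted to the local compactness regime $c \in (0, c^\ast)$. The geometric facts I would use at the outset are that $i(A) = i(C) = k$ via the odd homeomorphism $u \mapsto Ru$, and that the odd continuous radial retraction $X \setminus \set{0} \to A$ sending $tu$ to $Ru$ (for $u \in C$, $0 < t \le R$) gives $i(X \setminus \set{0}) = k$ as well. Thus $X$ is cohomologically a cone of index $k$ joining the strict local minimum at $0$ to the symmetric set $A$ on which $E \le 0$, so the topology is exactly that of a symmetric linking between $\set{0}$ and $A$.

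For $j = 1, \dots, k$ I would define a Benci-type pseudoindex minimax value
\[
c_j = \inf_{\gamma \in \Gamma}\, \sup_{u \in \gamma(D_j)} E(u),
\]
where $\Gamma$ is the class of odd continuous maps $\gamma \colon X \to W$ fixing $A$ pointwise, and $D_j \subseteq X$ is a symmetric compact subcone of pseudoindex $j$ relative to the pair $(X, A)$ (obtained from the cup-length decomposition underlying $i(C) = k$). Monotonicity and subadditivity of $i$ would force $c_1 \le c_2 \le \dots \le c_k$, and taking $\gamma = \mathrm{id}$ gives the upper bound $c_j \le \sup_X E < c^\ast$. For the positive lower bound, the strict local minimum provides $\rho > 0$ with $E > 0$ on $\bar B_\rho \setminus \set{0}$ and $A \cap \bar B_\rho = \emptyset$; a linking argument based on the index identity $i(\gamma(X) \setminus \set{0}) \ge k$ forces $\gamma(D_j) \cap \set{u : \norm{u} = \rho} \ne \emptyset$, and a local \PS{}-based contradiction (via Ekeland on the sphere of radius $\rho$) upgrades strict minimality to a uniform gap $\alpha := \inf_{\norm{u} = \rho} E(u) > 0$, yielding $c_j \ge \alpha > 0$.

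With each $c_j$ lying in the \PS{}-compact range $(0,c^\ast)$, the equivariant deformation lemma then applies: if $c_j$ were not a critical value, composing an almost-optimal $\gamma$ with a small odd deformation $\eta$ that lowers $E$ slightly near $c_j$ would yield an admissible map violating the infimum. For a degenerate cluster $c := c_j = c_{j+1} = \dots = c_{j+m}$, a standard excision-and-deform argument would show $i(K_c) \ge m + 1$ for the symmetric critical set $K_c$: otherwise one could excise a small equivariant neighborhood of $K_c$ from the sublevel set above $c$ and push the remainder below $c$, simultaneously strictly decreasing $c_j, \dots, c_{j+m}$. Summing over distinct critical levels produces the claimed $k$ pairs $\pm u_1, \dots, \pm u_k$ of nontrivial critical points with $0 < E(u_j) \le \sup_X E$.

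The most delicate step I expect is the positive lower bound $c_j > 0$: a strict local minimum in a Banach space does not by itself guarantee a uniform positive gap on any sphere, so one genuinely needs the index-linking property of $\gamma(D_j)$ together with local \PS{} just above level $0$ in order to rule out $c_j = 0$. A second subtle point is arranging the pseudoindex so that the $c_j$ are not all forced to coincide with a single mountain-pass value; this is precisely where the cup-length/subadditivity structure of the $\Z_2$-cohomological index, as opposed to Krasnoselskii genus, is essential. Finally, because the deformation lemma is only available below $c^\ast$, any pseudo-gradient flow used must be cut off before it would cross the compactness threshold.
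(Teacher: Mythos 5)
First, a point of reference: the paper does not prove this statement at all --- it is imported verbatim from Perera \cite[Theorem 2.1]{Pe23} and used as a black box --- so there is no internal proof to compare your argument against. Judged on its own terms, your outline follows the natural Benci-type pseudoindex minimax route, and several of its ingredients are sound: $i(A) = i(C) = k$ via the odd homeomorphism $u \mapsto Ru$, $i(X \setminus \set{0}) = k$ via the odd radial deformation retraction onto $A$, the upper bound $c_j \le \sup_X E < c^\ast$ from $\gamma = \mathrm{id}$, and the multiplicity-of-critical-points argument $i(K_c) \ge m+1$ at a degenerate level, all of which live in the range $(0, c^\ast)$ where deformation is available.

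However, there is a genuine gap at precisely the step you flag as delicate, and your proposed repair does not close it. A strict local minimizer of a $C^1$ functional on an infinite-dimensional Banach space need not satisfy $\inf_{\norm{u} = \rho} E(u) > 0$ for any $\rho$, and the hypothesis only gives \PS{c} for $c \in (0, c^\ast)$ --- \emph{not} at $c = 0$, which is exactly the level at which your contradiction argument would need compactness: a minimizing sequence on the sphere with $E \to 0^+$ is not a \PS{c} sequence for any fixed $c$ in the admissible range. Moreover, Ekeland's principle applied to $E$ restricted to the sphere $\set{\norm{u} = \rho}$ (assuming the norm is even smooth enough for this sphere to be a $C^1$ manifold) produces almost-critical points of the \emph{constrained} functional, i.e.\ $E'(v_j) - \mu_j J(v_j) \to 0$ with unknown multipliers $\mu_j$, which is not a Palais--Smale sequence for $E$ and so cannot be fed into the assumed compactness even if level $0$ were covered. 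Handling $c_j > 0$ under the bare hypotheses ``strict local minimizer'' plus ``\PS{c} only for $c \in (0, c^\ast)$'' is the actual content of \cite{Pe23} and requires a different idea than a uniform mountain-pass gap. A secondary, more structural issue: your minimax classes rest on ``symmetric compact subcones $D_j \subseteq X$ of pseudoindex $j$ obtained from the cup-length decomposition,'' but such subcones would correspond to symmetric subsets $C_j \subseteq C$ with $i(C_j) = j$, whose existence you neither prove nor can take for granted; the standard pseudoindex formulation instead minimaxes over the family of \emph{all} symmetric sets $M$ with $i^\ast(M) \ge j$ (nonempty because $A$ itself qualifies), which avoids constructing any $D_j$. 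As written, the proposal is an accurate map of where the difficulties lie but does not constitute a proof.
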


Dirichlet eigenvalues of the $p$-Laplacian in $\Omega$ coincide with critical values of the functional
\[
\Psi(u) = \frac{1}{\int_\Omega |u|^p\, dx}, \quad u \in S = \set{u \in W^{1,\,p}_0(\Omega) : \norm{u} = 1}.
\]
Let $\F$ be the class of symmetric subsets of $S$, let $\F_k = \set{M \in \F : i(M) \ge k}$, and set
\[
\lambda_k := \inf_{M \in \F_k}\, \sup_{u \in M}\, \Psi(u), \quad k \in \N.
\]
Then $\seq{\lambda_k}$ is an unbounded sequence of eigenvalues (see Perera \cite{MR1998432}). Moreover, if $\lambda_k < \lambda_{k+1}$, then the sublevel set
\[
\Psi^{\lambda_k} = \set{u \in S : \Psi(u) \le \lambda_k}
\]
has a compact symmetric subset of index $k$ (see Degiovanni and Lancelotti \cite{MR2514055}). We will prove Theorem \ref{Theorem 1} by applying Theorem \ref{Theorem 3} with this set as $C$.

Solutions of problem \eqref{1} coincide with critical points of the $C^1$-functional
\[
E(u) = \frac{1}{p} \int_\Omega |\nabla u|^p\, dx + \frac{\mu}{q} \int_\Omega |u|^q\, dx - \frac{\lambda}{p} \int_\Omega |u|^p\, dx - \frac{1}{p^\ast} \int_\Omega |u|^{p^\ast} dx, \quad u \in W^{1,\,p}_0(\Omega).
\]
First we prove the following local \PS{} condition.

\begin{lemma} \label{Lemma 1}
The functional $E$ satisfies the {\em \PS{c}} condition for $c < \frac{1}{N}\, S^{N/p}$.
\end{lemma}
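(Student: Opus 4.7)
The plan is to follow the standard Br\'ezis-Nirenberg strategy for establishing local \PS{} below the critical threshold $\tfrac{1}{N}\, S^{N/p}$. Fix a \PS{c} sequence $\seq{u_j}$ with $c < \tfrac{1}{N}\, S^{N/p}$ and write $t_j = \norm{u_j}$ for the $W^{1,p}_0$ norm.

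\emph{Step 1: Boundedness.} The combination
\[
p\, E(u_j) - \langle E'(u_j), u_j \rangle = \frac{p-q}{q}\, \mu \int_\Omega |u_j|^q\, dx + \frac{p}{N} \int_\Omega |u_j|^{p^\ast} dx = p\, c + \o(1) + \o(t_j)
\]
bounds $\int_\Omega |u_j|^q\, dx$ and $\int_\Omega |u_j|^{p^\ast} dx$ by $C(1 + t_j)$. H\"older on the bounded domain gives $\int_\Omega |u_j|^p\, dx \le |\Omega|^{(p^\ast - p)/p^\ast}\, \left(\int_\Omega |u_j|^{p^\ast} dx\right)^{p/p^\ast} = O((1 + t_j)^{p/p^\ast})$. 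Substituting into $\langle E'(u_j), u_j \rangle = \o(t_j)$ yields $t_j^p \le C(1 + t_j) + \o(t_j)$, hence $\seq{t_j}$ is bounded.

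\emph{Step 2: Weak limit and $E'(u) = 0$.} Pass to a subsequence so that $u_j \rightharpoonup u$ weakly in $W^{1,p}_0(\Omega)$, pointwise a.e., and strongly in $L^r(\Omega)$ for every $r < p^\ast$. A Boccardo-Murat-type argument using $E'(u_j) \to 0$ yields $\nabla u_j \to \nabla u$ a.e.\ after a further subsequence; combined with the strong $L^r$ convergence for $r < p^\ast$ and the weak $L^{p^\ast/(p^\ast - 1)}$ convergence of $|u_j|^{p^\ast - 2}\, u_j$, one passes to the limit in $\langle E'(u_j), \varphi \rangle \to 0$ for $\varphi \in C^\infty_c(\Omega)$, obtaining $E'(u) = 0$. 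The Nehari-type identity then gives
\[
E(u) = E(u) - \frac{1}{p}\, \langle E'(u), u \rangle = \mu\, \frac{p - q}{pq} \int_\Omega |u|^q\, dx + \frac{1}{N} \int_\Omega |u|^{p^\ast} dx \ge 0,
\]
where the crucial sign information comes precisely from $1 < q < p < p^\ast$ and $\mu > 0$ --- the $\mu\, |u|^{q-2}\, u$ term is what buys us $E(u) \ge 0$.

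\emph{Step 3: Strong convergence via Br\'ezis-Lieb.} Set $v_j = u_j - u$. The Br\'ezis-Lieb lemma gives
\[
\int_\Omega |\nabla u_j|^p\, dx = \int_\Omega |\nabla u|^p\, dx + \int_\Omega |\nabla v_j|^p\, dx + \o(1), \quad \int_\Omega |u_j|^{p^\ast} dx = \int_\Omega |u|^{p^\ast} dx + \int_\Omega |v_j|^{p^\ast} dx + \o(1),
\]
while $\int_\Omega |u_j|^q\, dx$ and $\int_\Omega |u_j|^p\, dx$ converge to their counterparts by compact embedding. Subtracting $\langle E'(u), u \rangle = 0$ from $\langle E'(u_j), u_j \rangle = \o(1)$ and using Br\'ezis-Lieb on the two leading terms yields $\int_\Omega |\nabla v_j|^p\, dx - \int_\Omega |v_j|^{p^\ast} dx \to 0$. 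Along a further subsequence both limits equal some $b \ge 0$, and the Sobolev inequality $S\, b^{p/p^\ast} \le b$ forces $b = 0$ or $b \ge S^{N/p}$. However,
\[
c = \lim_{j \to \infty} E(u_j) = E(u) + \frac{b}{p} - \frac{b}{p^\ast} = E(u) + \frac{b}{N},
\]
so $b \ge S^{N/p}$ together with $E(u) \ge 0$ would give $c \ge \tfrac{1}{N}\, S^{N/p}$, contradicting the hypothesis. Therefore $b = 0$, i.e., $v_j \to 0$ in $W^{1,p}_0(\Omega)$.

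The main obstacle is Step 2: showing $E'(u) = 0$ requires a.e.\ convergence of the gradients for the nonlinear $p$-Laplacian term, which must be done without the help of compactness in the critical setting. Once this is secured, the nonnegativity $E(u) \ge 0$ and the Br\'ezis-Lieb dichotomy combine cleanly with the threshold $c < \tfrac{1}{N}\, S^{N/p}$.
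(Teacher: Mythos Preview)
Your proof is correct and follows essentially the same Br\'ezis--Nirenberg strategy as the paper: boundedness of the \PS{c} sequence, the weak limit satisfies the Nehari identity $\langle E'(u),u\rangle = 0$, and then a Br\'ezis--Lieb splitting combined with the Sobolev dichotomy and $E(u)\ge 0$ (coming from $q<p$ and $\mu>0$) rules out $b\ge S^{N/p}$. The only notable difference is that you are explicit about the a.e.\ gradient convergence (Boccardo--Murat) needed to identify the limit in the $p$-Laplacian term and to apply Br\'ezis--Lieb to $|\nabla u_j|^p$, while the paper passes to the limit in $\langle E'(u_j),u\rangle$ and applies Br\'ezis--Lieb on gradients without spelling this out; your boundedness argument also differs cosmetically (the paper subtracts $E'(u_j)u_j/r$ from $E(u_j)$ for some $r\in(p,p^\ast)$), but both work.
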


\begin{proof}
Let $\seq{u_j}$ be a sequence in $W^{1,\,p}_0(\Omega)$ such that
\begin{equation} \label{12}
E(u_j) = \frac{1}{p} \int_\Omega |\nabla u_j|^p\, dx + \frac{\mu}{q} \int_\Omega |u_j|^q\, dx - \frac{\lambda}{p} \int_\Omega |u_j|^p\, dx - \frac{1}{p^\ast} \int_\Omega |u_j|^{p^\ast} dx = c + \o(1)
\end{equation}
and
\begin{multline} \label{13}
E'(u_j)\, v = \int_\Omega |\nabla u_j|^{p - 2}\, \nabla u_j \cdot \nabla v\, dx + \mu \int_\Omega |u_j|^{q - 2}\, u_j\, v\, dx - \lambda \int_\Omega |u_j|^{p - 2}\, u_j\, v\, dx\\[7.5pt]
- \int_\Omega |u_j|^{p^\ast - 2}\, u_j\, v\, dx = \o(\norm{v}) \quad \forall v \in W^{1,\,p}_0(\Omega).
\end{multline}

First we show that $\seq{u_j}$ is bounded. Taking $v = u_j$ in \eqref{13} gives
\begin{equation} \label{14}
\int_\Omega |\nabla u_j|^p\, dx + \mu \int_\Omega |u_j|^q\, dx - \lambda \int_\Omega |u_j|^p\, dx - \int_\Omega |u_j|^{p^\ast} dx = \o(\norm{u_j}).
\end{equation}
Let $r \in (p,p^\ast)$. Subtracting \eqref{14} divided by $r$ from \eqref{12} gives
\begin{multline*}
\left(\frac{1}{p} - \frac{1}{r}\right) \int_\Omega |\nabla u_j|^p\, dx + \mu \left(\frac{1}{q} - \frac{1}{r}\right) \int_\Omega |u_j|^q\, dx - \lambda \left(\frac{1}{p} - \frac{1}{r}\right) \int_\Omega |u_j|^p\, dx\\[7.5pt]
+ \left(\frac{1}{r} - \frac{1}{p^\ast}\right) \int_\Omega |u_j|^{p^\ast} dx = c + \o(1) + \o(\norm{u_j}).
\end{multline*}
Since $1 < q < p < r < p^\ast$, this implies that $\norm{u_j}$ is bounded.

Passing to a subsequence, we may now assume that $\seq{u_j}$ converges to some $u$ weakly in $W^{1,\,p}_0(\Omega)$, strongly in $L^s(\Omega)$ for all $s \in [1,p^\ast)$, and a.e.\! in $\Omega$. Set $\widetilde{u}_j = u_j - u$. We will show that if, for some $\eps_0 > 0$, $\norm{\widetilde{u}_j} \ge \eps_0$, then $c \ge \frac{1}{N}\, S^{N/p}$, contrary to assumption, and conclude that $u_j \to u$ in $W^{1,\,p}_0(\Omega)$ for a further subsequence.

Since $u_j \to u$ in $L^p(\Omega)$ and in $L^q(\Omega)$, \eqref{12} and \eqref{14} reduce to
\begin{equation} \label{15}
\frac{1}{p} \int_\Omega |\nabla u_j|^p\, dx + \frac{\mu}{q} \int_\Omega |u|^q\, dx - \frac{\lambda}{p} \int_\Omega |u|^p\, dx - \frac{1}{p^\ast} \int_\Omega |u_j|^{p^\ast} dx = c + \o(1)
\end{equation}
and
\begin{equation} \label{16}
\int_\Omega |\nabla u_j|^p\, dx + \mu \int_\Omega |u|^q\, dx - \lambda \int_\Omega |u|^p\, dx - \int_\Omega |u_j|^{p^\ast} dx = \o(1),
\end{equation}
respectively. Moreover, taking $v = u$ in \eqref{13} and passing to the limit gives
\begin{equation} \label{17}
\int_\Omega |\nabla u|^p\, dx + \mu \int_\Omega |u|^q\, dx - \lambda \int_\Omega |u|^p\, dx - \int_\Omega |u|^{p^\ast} dx = 0.
\end{equation}
We have
\begin{gather}
\notag \int_\Omega |\nabla u_j|^p\, dx - \int_\Omega |\nabla u|^p\, dx = \int_\Omega |\nabla \widetilde{u}_j|^p\, dx + \o(1),\\[7.5pt]
\notag \int_\Omega |u_j|^{p^\ast} dx - \int_\Omega |u|^{p^\ast} dx = \int_\Omega |\widetilde{u}_j|^{p^\ast} dx + \o(1)
\end{gather}
by the Br{\'e}zis-Lieb lemma \cite[Theorem 1]{MR699419}, so subtracting \eqref{17} from \eqref{16} gives
\[
\int_\Omega |\nabla \widetilde{u}_j|^p\, dx = \int_\Omega |\widetilde{u}_j|^{p^\ast} dx + \o(1).
\]
Combining this with $\eqref{6}$ and using $\norm{\widetilde{u}_j} \ge \eps_0$, we get
\begin{equation} \label{18}
\int_\Omega |\nabla \widetilde{u}_j|^p\, dx \ge S^{N/p} + \o(1).
\end{equation}
On the other hand, subtracting \eqref{16} divided by $p^\ast$ from \eqref{15} gives
\[
\frac{1}{N} \int_\Omega |\nabla u_j|^p\, dx + \mu \left(\frac{1}{q} - \frac{1}{p^\ast}\right) \int_\Omega |u|^q\, dx - \frac{\lambda}{N} \int_\Omega |u|^p\, dx = c + \o(1),
\]
and subtracting \eqref{17} divided by $N$ from this gives
\[
\frac{1}{N} \int_\Omega |\nabla \widetilde{u}_j|^p\, dx + \mu \left(\frac{1}{q} - \frac{1}{p}\right) \int_\Omega |u|^q\, dx + \frac{1}{N} \int_\Omega |u|^{p^\ast} dx = c + \o(1).
\]
Since $p > q$, this together with \eqref{18} implies that $c \ge \frac{1}{N}\, S^{N/p}$.
\end{proof}

We are now ready to prove Theorem \ref{Theorem 1}.

\begin{proof}[Proof of Theorem \ref{Theorem 1}]
In view of Lemma \ref{Lemma 1}, we apply Theorem \ref{Theorem 3} with $c^\ast = \frac{1}{N}\, S^{N/p}$. First we show that the origin is a strict local minimizer of $E$. We have the Gagliardo-Nirenberg inequality
\[
\int_\Omega |u|^p\, dx \le C \left(\int_\Omega |\nabla u|^p\, dx\right)^\theta \left(\int_\Omega |u|^q\, dx\right)^{(1 - \theta)\, p/q},
\]
where $C > 0$ is a constant and $\theta = N(p - q)/[pq + N(p - q)] \in (0,1)$. Combining this with the Young's inequality shows that given any $\eps > 0$, there exists a constant $C_\eps > 0$ such that
\[
\int_\Omega |u|^p\, dx \le \eps \int_\Omega |\nabla u|^p\, dx + C_\eps \left(\int_\Omega |u|^q\, dx\right)^{p/q}.
\]
This together with \eqref{6} gives
\begin{multline*}
E(u) \ge \frac{1}{p}\, (1 - \lambda \eps) \int_\Omega |\nabla u|^p\, dx - \frac{1}{p^\ast\, S^{p^\ast/p}} \left(\int_\Omega |\nabla u|^p\, dx\right)^{p^\ast/p}\\[7.5pt]
+ \frac{\mu}{q} \int_\Omega |u|^q\, dx - \frac{\lambda C_\eps}{p} \left(\int_\Omega |u|^q\, dx\right)^{p/q}.
\end{multline*}
Since $\eps > 0$ is arbitrary and $p^\ast > p > q$, the desired conclusion follows from this.

By increasing $k$ if necessary, we may assume that $\lambda_k < \lambda_{k+1}$. Then the sublevel set $\Psi^{\lambda_k}$ has a compact symmetric subset $C$ of index $k$ (see Degiovanni and Lancelotti \cite{MR2514055}). For $u \in C$,
\begin{equation} \label{100}
\int_\Omega |\nabla u|^p\, dx = 1, \qquad \int_\Omega |u|^p\, dx \ge \frac{1}{\lambda_k} \ge \frac{1}{\lambda}.
\end{equation}
By the H\"{o}lder inequality,
\[
\int_\Omega |u|^q\, dx \le \vol{\Omega}^{1 - q/p^\ast}\! \left(\int_\Omega |u|^{p^\ast} dx\right)^{q/p^\ast}.
\]
So for $u \in C$ and $t \ge 0$,
\begin{multline} \label{7}
E(tu) = \frac{t^p}{p} \left(\int_\Omega |\nabla u|^p\, dx - \lambda \int_\Omega |u|^p\, dx\right) + \frac{\mu t^q}{q} \int_\Omega |u|^q\, dx - \frac{t^{p^\ast}}{p^\ast} \int_\Omega |u|^{p^\ast} dx\\[7.5pt]
\le \frac{\mu \vol{\Omega}^{1 - q/p^\ast}}{q} \left(t^{p^\ast} \int_\Omega |u|^{p^\ast} dx\right)^{q/p^\ast} - \frac{t^{p^\ast}}{p^\ast} \int_\Omega |u|^{p^\ast} dx,
\end{multline}
and maximizing the last expression over all $t \ge 0$ gives
\begin{equation} \label{8}
\sup_{u \in C,\, t \ge 0}\, E(tu) \le \left(\frac{1}{q} - \frac{1}{p^\ast}\right)\! \vol{\Omega} \mu^{p^\ast/(p^\ast - q)}.
\end{equation}
Let $R > 0$ and let $A$ and $X$ be as in Theorem \ref{Theorem 3}. Then \eqref{7} gives
\[
E(Ru) \le \frac{\mu \vol{\Omega}^{1 - q/p^\ast}\! R^q}{q} \left(\int_\Omega |u|^{p^\ast} dx\right)^{q/p^\ast} - \frac{R^{p^\ast}}{p^\ast} \int_\Omega |u|^{p^\ast} dx \quad \forall u \in C.
\]
Since
\[
\int_\Omega |u|^{p^\ast} dx \ge \frac{1}{\vol{\Omega}^{p^\ast/p - 1}} \left(\int_\Omega |u|^p\, dx\right)^{p^\ast/p} \ge \frac{1}{\vol{\Omega}^{p^\ast/p - 1} \lambda_k^{p^\ast/p}}
\]
for $u \in C$ by the H\"{o}lder inequality and \eqref{100}, this implies that
\[
\sup_{u \in A}\, E(u) \le 0
\]
if $R$ is sufficiently large. On the other hand, \eqref{8} gives
\[
\sup_{u \in X}\, E(u) \le \left(\frac{1}{q} - \frac{1}{p^\ast}\right)\! \vol{\Omega} \mu^{p^\ast/(p^\ast - q)} < \frac{1}{N}\, S^{N/p}
\]
by \eqref{2}. So $E$ has $k$ distinct pairs of nontrivial critical points $\pm u^{\lambda,\,\mu}_j,\, j = 1,\dots,k$ satisfying
\begin{equation} \label{9}
0 < E(u^{\lambda,\,\mu}_j) \le \left(\frac{1}{q} - \frac{1}{p^\ast}\right)\! \vol{\Omega} \mu^{p^\ast/(p^\ast - q)}.
\end{equation}

It only remains to show that each $u^{\lambda,\,\mu}_j \to 0$ as $\mu \searrow 0$. We have
\begin{equation} \label{10}
E(u^{\lambda,\,\mu}_j) = \frac{1}{p} \int_\Omega |\nabla u^{\lambda,\,\mu}_j|^p\, dx + \frac{\mu}{q} \int_\Omega |u^{\lambda,\,\mu}_j|^q\, dx - \frac{\lambda}{p} \int_\Omega |u^{\lambda,\,\mu}_j|^p\, dx - \frac{1}{p^\ast} \int_\Omega |u^{\lambda,\,\mu}_j|^{p^\ast} dx \to 0
\end{equation}
as $\mu \searrow 0$ by \eqref{9} and
\begin{equation} \label{11}
E'(u^{\lambda,\,\mu}_j)\, u^{\lambda,\,\mu}_j = \int_\Omega |\nabla u^{\lambda,\,\mu}_j|^p\, dx + \mu \int_\Omega |u^{\lambda,\,\mu}_j|^q\, dx - \lambda \int_\Omega |u^{\lambda,\,\mu}_j|^p\, dx - \int_\Omega |u^{\lambda,\,\mu}_j|^{p^\ast} dx = 0.
\end{equation}
Subtracting \eqref{11} divided by $p$ from \eqref{10} gives
\[
\mu \left(\frac{1}{q} - \frac{1}{p}\right) \int_\Omega |u^{\lambda,\,\mu}_j|^q\, dx + \frac{1}{N} \int_\Omega |u^{\lambda,\,\mu}_j|^{p^\ast} dx \to 0,
\]
so $\int_\Omega |u^{\lambda,\,\mu}_j|^{p^\ast} dx \to 0$. Then $\int_\Omega |u^{\lambda,\,\mu}_j|^p\, dx \to 0$ by the H\"{o}lder inequality, so $\int_\Omega |\nabla u^{\lambda,\,\mu}_j|^p\, dx \to 0$ by \eqref{11}.
\end{proof}

\section*{Acknowledgements}

This work was supported and funded by the Deanship of Scientific Research at Imam Mohammad Ibn Saud Islamic University (IMSIU) (grant number IMSIU-RP23037).

\def\cprime{$''$}

\end{document}